\newtheorem{thm}{Theorem}[section]
\newtheorem{cor}[thm]{Corollary}
\newtheorem{lem}[thm]{Lemma}
\newtheorem{prop}[thm]{Proposition}
\theoremstyle{definition}
\newtheorem{defin}{Definition}[section]
\newtheorem{rem}[thm]{Remark}
\newtheorem{question}[thm]{Question}
\numberwithin{equation}{thm}
\begin{document}
	
\title{A Recurrence For An Expression Involving Double Factorials}
\author{Joseph E. Cooper III}
\email{jecooper@alum.mit.edu}
\date{}

\begin{abstract}
We find a closed-form solution to the recurrence $\ z_{n+2} = \frac{1}{z_{n+1}} + z_n$, where $n \in \mathbb Z_{\geq 1}$ and $z_1 \in \mathbb R_{> 0},\  z_2 \in \mathbb R_{> 0}$. As a corollary, we derive an alternate proof of a recurrence for an expression involving double factorials which first appeared in the 2004 Putnam Contest. We subsequently pose several open questions suitable for motivated undergraduate students.
\end{abstract}
\maketitle

\section{Recurrence Relation}

The general recurrence we wish to solve is:

\begin{equation}\label{GeneralRecurrence}
z_1 \in \mathbb R_{> 0},\  z_2 \in \mathbb R_{> 0},\ z_{n+2} = \frac{1}{z_{n+1}} + z_n.
\end{equation}

The solution will be shown to be (for $n \geq 3$ and $\epsilon(n)$ defined in ~\ref{epsmod})

\begin{equation}
z_n = z_{\epsilon(n)}\cdot\displaystyle\prod\limits_{k=1}^{\lceil \frac{n}{2} \rceil - 1}\frac{n - 2k + z_1 z_2}{n - 2k - 1 + z_1 z_2}
\end{equation}

As a corollary we obtain another proof of
\begin{equation}\label{DoubleFactorialPutnam}
\forall n \in \mathbb Z_{\geq 3}, \frac{n!!}{(n-1)!!} = \frac{1}{\left(\frac{(n-1)!!}{(n-2)!!}\right)} + \frac{(n-2)!!}{(n-3)!!}, 
\end{equation} which is a result originally coming from Problem A3 of the 2004 Putnam Contest, according to \cite{oeis3}.

Not only does equation ~\ref{GeneralRecurrence} bear a vague similarity to the one defining the Fibonacci numbers, but the sequence we derive will conveniently satisfy an identity similar to Catalan's Identity for the Fibonacci numbers.

The ratio $\frac{(n+1)!!}{(n)!!}$ arose in \cite{jansontraveling}, wherein it is shown that the ratio's reciprocal (up to a constant factor) is equal to the average distance from a random point in the unit n-sphere along a random ray to the intersection of that ray with the sphere.

To address the proposition, we'll first need to develop a few tools.

\begin{defin}\label{epsmod} For $n \in \mathbb Z$ let
	\begin{equation*}
		\epsilon(n) = 
		\begin{cases} 
			2 & n \equiv 0 \pmod{2}
			\\
			1 & n \equiv 1 \pmod{2}
		\end{cases}
	\end{equation*}.
\end{defin}

\begin{defin}\label{DefnGGamma} Let $\Gamma_1 = z_1$ (from eqn. \ref{GeneralRecurrence}), $\Gamma_2 = z_2$. 
	
	For $j \in \mathbb Z_{\geq 1}$ let
	\begin{equation*}
		G(j) = \frac{j + \Gamma_1\Gamma_2}{j - 1 + \Gamma_1\Gamma_2}
	\end{equation*}
	
	and for $n \in \mathbb Z_{\geq 3}$ let
	\begin{equation*}
		\large\Gamma_n = \Gamma_{\epsilon(n)}\cdot\displaystyle\prod\limits_{k=1}^{\lceil \frac{n}{2} \rceil - 1} G(n - 2k)		
	\end{equation*}.
\end{defin}

\begin{rem}\label{RemPositive}
	Note that $G(j) > 0$ and therefore by closure of $\mathbb R_{> 0}$ under multiplication it follows that $\Gamma_n > 0$.
\end{rem}

Then our proposed solution to (\ref{GeneralRecurrence}) can be stated as

\begin{prop}[Translated Double Factorial Recurrence]\label{MainProp}
	$\{\Gamma_n\}_{n \geq 1}$ as defined in (\ref{DefnGGamma}) solves the recurrence (\ref{GeneralRecurrence}). That is, letting $\Gamma_1 = z_1 \in \mathbb R_{> 0}$ and letting $\Gamma_2 = z_2 \in \mathbb R_{> 0}$, then we have
	$\Gamma_{n} = \frac{1}{\Gamma_{n-1}} + \Gamma_{n-2}$ $(\forall n \in \mathbb Z_{\geq 3})$.
\end{prop}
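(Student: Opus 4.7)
The plan is to prove the recurrence by induction on $n$, after setting up two algebraic preliminaries. The first is a two-step multiplicative recurrence $\Gamma_n = \Gamma_{n-2}\cdot G(n-2)$, valid for all $n \geq 3$. The second is a product identity $\Gamma_n \Gamma_{n-1} = n - 2 + \Gamma_1 \Gamma_2$, valid for all $n \geq 2$.

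The first preliminary follows directly from Definition \ref{DefnGGamma}: since $\epsilon(n) = \epsilon(n-2)$, the products defining $\Gamma_n$ and $\Gamma_{n-2}$ share all their common factors and differ only by the single new factor $G(n-2)$ appearing at $k = 1$ in the product for $\Gamma_n$. This should be verified separately for both parities of $n$, and the two small cases $n = 3, 4$ (in which $\Gamma_{n-2}$ equals the initial datum $\Gamma_1$ or $\Gamma_2$ and the relevant product is empty) should be checked by hand.

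Granted the first preliminary, the target equation $\Gamma_n = \frac{1}{\Gamma_{n-1}} + \Gamma_{n-2}$ rewrites as
\[
\Gamma_n - \Gamma_{n-2} \;=\; \Gamma_{n-2}\bigl(G(n-2) - 1\bigr) \;=\; \frac{\Gamma_{n-2}}{n - 3 + \Gamma_1 \Gamma_2},
\]
which equals $1/\Gamma_{n-1}$ precisely when $\Gamma_{n-1}\Gamma_{n-2} = n - 3 + \Gamma_1 \Gamma_2$, i.e., the second preliminary evaluated at $n-1$. So the proposition reduces to the product identity.

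The product identity itself is proved by induction. The base case $n = 2$ is immediate. For the inductive step, I would multiply the hypothesis $\Gamma_{n-1}\Gamma_{n-2} = n - 3 + \Gamma_1\Gamma_2$ through by $G(n-2) = \frac{n-2+\Gamma_1\Gamma_2}{n-3+\Gamma_1\Gamma_2}$, and apply the first preliminary on the left-hand side: the denominator $n-3+\Gamma_1\Gamma_2$ cancels and one obtains $\Gamma_n\Gamma_{n-1} = n - 2 + \Gamma_1\Gamma_2$. The main obstacle throughout is the bookkeeping around $\lceil n/2 \rceil$ in the first preliminary, which forces a careful case split on the parity of $n$ to confirm that the ``extra'' factor relating the two products is indeed $G(n-2)$ rather than some boundary term, and likewise requires verifying that the $n \geq 3$ branch of Definition \ref{DefnGGamma} glues seamlessly to the initial values $\Gamma_1$ and $\Gamma_2$.
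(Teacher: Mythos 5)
Your proposal is correct and follows essentially the same route as the paper: it isolates the same two lemmas (the multiplicative recurrence $\Gamma_n = G(n-2)\cdot\Gamma_{n-2}$ and the Catalan-type identity $\Gamma_n\Gamma_{n-1} = n-2+\Gamma_1\Gamma_2$) and combines them in the same way to get the recurrence. The only difference is cosmetic: your induction for the product identity steps by one and needs only the base case $n=2$, whereas the paper applies the multiplicative recurrence to both factors, effectively steps by two, and therefore checks both $n=2$ and $n=3$.
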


\begin{lem}\label{EqnRecursiveDefnGamma}
First, notice that $(\forall n \in \mathbb Z_{\geq 3})$, $\Gamma_{n} = G(n-2)\cdot \Gamma_{n-2}$
\end{lem}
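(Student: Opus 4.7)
The plan is to prove this by direct unwinding of Definition \ref{DefnGGamma}, using three elementary facts about the indexing: (i) $\epsilon(n-2) = \epsilon(n)$, since subtracting $2$ preserves parity; (ii) $\lceil (n-2)/2 \rceil - 1 = \lceil n/2 \rceil - 2$, so the product defining $\Gamma_{n-2}$ has exactly one fewer factor than the product defining $\Gamma_n$; and (iii) under the reindexing $j = k+1$, the factor $G(n-2-2k)$ becomes $G(n-2j)$. The claim then amounts to the observation that the missing factor is precisely $G(n-2)$.

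I would first dispense with the base cases $n = 3$ and $n = 4$, where $\Gamma_{n-2}$ is simply $\Gamma_1$ or $\Gamma_2$ (no product) and the definition of $\Gamma_n$ reduces to a single-factor product yielding $\Gamma_{\epsilon(n)} \cdot G(n-2)$, which gives the lemma immediately. For $n \geq 5$, I would write
\[
\Gamma_{n-2} = \Gamma_{\epsilon(n-2)} \cdot \prod_{k=1}^{\lceil (n-2)/2 \rceil - 1} G(n - 2 - 2k),
\]
apply fact (i) to replace $\Gamma_{\epsilon(n-2)}$ by $\Gamma_{\epsilon(n)}$, and then apply facts (ii) and (iii) to rewrite the product as $\prod_{j=2}^{\lceil n/2 \rceil - 1} G(n - 2j)$. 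Multiplying both sides by $G(n-2) = G(n - 2\cdot 1)$ extends the index range down to $j = 1$, and the right-hand side becomes exactly the definition of $\Gamma_n$.

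There is no real obstacle here; the lemma is essentially a bookkeeping identity, and the only point requiring modest care is the ceiling identity (ii), which one can verify by splitting into the cases $n$ even and $n$ odd (in both cases $\lceil (n-2)/2 \rceil = \lceil n/2 \rceil - 1$). Everything else is formal manipulation of the explicit formula.
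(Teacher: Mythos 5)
Your proposal is correct and follows essentially the same route as the paper's proof: both unwind Definition \ref{DefnGGamma} using the parity fact $\epsilon(n-2)=\epsilon(n)$ and the ceiling identity $\lceil (n-2)/2\rceil - 1 = \lceil n/2\rceil - 2$, and identify the single extra factor in the product for $\Gamma_n$ as $G(n-2)$. The only cosmetic difference is that you build up from $\Gamma_{n-2}$ (treating $n=3,4$ separately), whereas the paper peels the $k=1$ factor off the product for $\Gamma_n$ directly.
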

\begin{proof}
Since $ \lceil \frac{n}{2} \rceil - 2 = \lceil \frac{n - 2}{2} \rceil - 1 $ and $\epsilon(n - 2) = \epsilon(n)$,  
it follows that for $n \geq 3$, 

\begin{alignat}{2}
	\Gamma_{n} &= \Gamma_{\epsilon(n)}\cdot\prod\limits_{k=1}^{\lceil \frac{(n)}{2} \rceil - 1} G(n - 2k) \nonumber\\
	& = \Gamma_{\epsilon(n-2)}\cdot G(n-2)\cdot\prod\limits_{k=1}^{\lceil \frac{(n - 2)}{2} \rceil - 1} G((n-2) - 2k) \nonumber\\
	& = G(n-2)\cdot \left(\Gamma_{\epsilon(n-2)} \cdot\prod\limits_{k=1}^{\lceil \frac{(n - 2)}{2} \rceil - 1} G((n-2) - 2k)\right) \nonumber\\
	& = G(n-2)\cdot \Gamma_{n-2}  \nonumber
\end{alignat}
\end{proof}

The proof of the proposition then hinges on a Catalan-type identity satisfied by consecutive elements of the $\Gamma_*$ sequence.

\begin{lem}\label{ProductIdentity}
	$\forall n \geq 2$, $\Gamma_n\Gamma_{n-1} = n - 2 + \Gamma_1\Gamma_2$.
\end{lem}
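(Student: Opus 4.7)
The plan is to prove this by straightforward induction on $n$, using Lemma \ref{EqnRecursiveDefnGamma} to reduce the product $\Gamma_n \Gamma_{n-1}$ to an expression involving $\Gamma_{n-2} \Gamma_{n-1}$. The identity itself is telescoping in nature: multiplying by $G(n-2)$ shifts the invariant $\Gamma_k \Gamma_{k-1} - \Gamma_1\Gamma_2$ up by exactly $1$, which is precisely what the definition $G(j) = (j + \Gamma_1 \Gamma_2)/(j-1 + \Gamma_1 \Gamma_2)$ is engineered to do.

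For the base case $n = 2$, the claim reads $\Gamma_2 \Gamma_1 = 0 + \Gamma_1 \Gamma_2$, which holds by commutativity of multiplication in $\mathbb{R}$. For the inductive step I would take $n \geq 3$ and assume the identity at index $n-1$, namely $\Gamma_{n-1} \Gamma_{n-2} = n - 3 + \Gamma_1 \Gamma_2$. Applying Lemma \ref{EqnRecursiveDefnGamma} gives $\Gamma_n = G(n-2) \cdot \Gamma_{n-2}$, so
\begin{equation*}
\Gamma_n \Gamma_{n-1} = G(n-2) \cdot \bigl(\Gamma_{n-2} \Gamma_{n-1}\bigr) = \frac{n-2 + \Gamma_1 \Gamma_2}{n-3 + \Gamma_1 \Gamma_2} \cdot (n - 3 + \Gamma_1 \Gamma_2) = n - 2 + \Gamma_1 \Gamma_2,
\end{equation*}
completing the induction.

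Honestly there is no real obstacle; the only thing worth verifying is that the cancellation in the last step is legitimate. This requires $n - 3 + \Gamma_1 \Gamma_2 \neq 0$, which is guaranteed for $n \geq 3$ since $\Gamma_1, \Gamma_2 \in \mathbb{R}_{>0}$ forces $n - 3 + \Gamma_1 \Gamma_2 > 0$. This is the same positivity that underlies Remark \ref{RemPositive} and ensures $G(n-2)$ is well-defined in the first place, so no separate verification is needed.
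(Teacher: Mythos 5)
Your proof is correct, and it takes a slightly leaner route than the paper's. The paper also argues by induction via Lemma \ref{EqnRecursiveDefnGamma}, but it expands \emph{both} factors, writing $\Gamma_{N+1}\Gamma_{N} = \bigl(G(N-1)\Gamma_{N-1}\bigr)\bigl(G(N-2)\Gamma_{N-2}\bigr)$, collapsing $G(N-1)G(N-2)$ to $\frac{N-1+\Gamma_1\Gamma_2}{N-3+\Gamma_1\Gamma_2}$, and then invoking the identity at index $N-1$; because the step reaches back two indices, the paper supplies two base cases ($n=2$ and $n=3$, the latter computed directly from the definition of $\Gamma_3$). You instead expand only the factor $\Gamma_n = G(n-2)\Gamma_{n-2}$, so the induction hypothesis at $n-1$ applies immediately and a single base case $n=2$ suffices. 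Your version is a genuine simplification: it avoids the paper's double decomposition and also sidesteps a small indexing slip in the paper's write-up (it says ``assume the statement for $n=N$'' but actually uses the statement at $n=N-1$). Your closing remark that the cancellation requires $n-3+\Gamma_1\Gamma_2 > 0$, guaranteed by $\Gamma_1,\Gamma_2 \in \mathbb{R}_{>0}$, is the same positivity consideration the paper relegates to Remark \ref{RemPositive}, and it is handled adequately.
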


\begin{proof}
	Proceed by induction; we prove two base cases: firstly, for n = 2, 	
	\[	\Gamma_2\Gamma_1 = 2 - 2 + \Gamma_2\Gamma_1\]	 	
	Secondly, for n = 3,	
	\[	\Gamma_3\Gamma_2 = \left(\frac{1+\Gamma_1\Gamma_2}{\Gamma_1\Gamma_2}\Gamma_1\right)\Gamma_2 = 1 + \Gamma_1\Gamma_2 = (3 - 2) + \Gamma_1\Gamma_2\]	
	Now assume that the statement is true for $n = N$, where $N \in \mathbb Z_{\geq 3}$. Then for $n = N+1$,	
	\begin{alignat}{2}\label{pfeq1}
		\Gamma_{N+1}\Gamma_{N} & = \left(G(N-1)\cdot \Gamma_{N-1}\right)\left(G(N-2)\cdot \Gamma_{N-2}\right)  \\ 
		& \text{(by Lemma $\ref{EqnRecursiveDefnGamma}$)} \nonumber \\
		& = \left(G(N-1)\cdot G(N-2)\right)\left( \Gamma_{N-1} \cdot \Gamma_{N-2}\right)  \\
		& = \left(\frac{N-1+\Gamma_1\Gamma_2}{N-2+\Gamma_1\Gamma_2} \cdot \frac{N-2+\Gamma_1\Gamma_2}{N-3+\Gamma_1\Gamma_2}  \right)\left( \Gamma_{N-1} \cdot \Gamma_{N-2}\right)  \\
		& = \left(\frac{N-1+\Gamma_1\Gamma_2}{N-3+\Gamma_1\Gamma_2}  \right)\left( \Gamma_{N-1} \cdot \Gamma_{N-2}\right)  \\		
		& = \left(\frac{N-1+\Gamma_1\Gamma_2}{N-3+\Gamma_1\Gamma_2}  \right)\left( N-3 + \Gamma_1\Gamma_2 \right)  \\
		& \text{(per our induction assumption)} \nonumber	\\	
		& = \left(N-1+\Gamma_1\Gamma_2\right)  
	\end{alignat}
\end{proof}

We now have all the tools we need in order to prove the proposition.

\begin{proof}[Proof of Prop. \ref{MainProp}]
	
	For $n \in \mathbb Z_{\geq 3}$,
	
	\begin{alignat}{2}\label{pfeq1}
		\frac{1}{\Gamma_{n-1}} + \Gamma_{n-2} & = \Gamma_{n-2} \cdot \left(\frac{1}{\Gamma_{n-1}\Gamma_{n-2}} + 1 \right) \\ 
		& \text{(Recall $\Gamma_* > 0$ (Remark $\ref{RemPositive}$))} \nonumber	\\		
		& = \Gamma_{n-2} \cdot \left(\frac{1}{n-3 + \Gamma_{1}\Gamma_{2}} + 1 \right) \\ 
		& \text{(by ($\ref{ProductIdentity}$))} \nonumber\\ 
		& = \Gamma_{n-2} \cdot \left(\frac{n-2 + \Gamma_{1}\Gamma_{2}}{n-3 + \Gamma_{1}\Gamma_{2}} \right) \\ 				
		& = \Gamma_{n-2} \cdot G(n-2)  \\
		& = \Gamma_{n}  \\		
		& \text{(by ($\ref{EqnRecursiveDefnGamma}$))} \nonumber		
	\end{alignat}
\end{proof}

\begin{question}
	Can the definition of $\Gamma_n$ be extended to $\Gamma_x$ (where $x$ may be any positive real number) such that $\Gamma_x$ is continuous, while still satisfying the recurrence formula (replacing $n$ with $x$)?
\end{question}

\begin{question}
	If such a function exists, does it have any other interesting properties or satisfy any other identities?
\end{question}

\begin{question}
	What if we additionally require the function to be differentiable on its domain?
\end{question}

\section{Double Factorial Expression}

It's now time to consider the expression $\frac{(n+1)!!}{n!!}$ (recall that $(2n)!! = (2n)\cdot(2n-2)\cdot\cdot\cdot2$ and $(2n+1)!! = (2n+1)\cdot(2n-1)\cdot\cdot\cdot1$, and that $0!! = 1$ ).

It just so happens that letting $\Gamma_1 = 1$ and $\Gamma_2 = 1$ yields the desired sequence of expressions. That is, 

\[ G(j) = \frac{j + \Gamma_1\Gamma_2}{j - 1 + \Gamma_1\Gamma_2} \]

becomes (using \textprime-notation to remind ourselves that we have made substitutions)

\[ G\textprime(j) := \frac{j + 1\cdot1}{j - 1 + 1\cdot1} = \frac{j + 1}{j} \],

and thus 

\[ \large\Gamma_n = \Gamma_{\epsilon(n)}\cdot\displaystyle\prod\limits_{k=1}^{\lceil \frac{n}{2} \rceil - 1} G(n - 2k) \]

becomes (noting that since we have substituted $\Gamma_1 = \Gamma_2 = 1$, it follows that $\Gamma_{\epsilon(n)} = 1$ $\forall n$)

\begin{alignat}{2}
	 \large\Gamma\textprime_n & = 1\cdot\displaystyle\prod\limits_{k=1}^{\lceil \frac{n}{2} \rceil - 1} G\textprime(n - 2k) \label{replace1}  \\ 		 
	 & = \displaystyle\prod\limits_{k=1}^{\lceil \frac{n}{2} \rceil - 1} \frac{(n-2k) + 1}{(n-2k)} \\	 
	 & = \frac{\prod\limits_{k=1}^{\lceil \frac{n}{2} \rceil - 1}(n-2k + 1)}{\prod\limits_{k=1}^{\lceil \frac{n}{2} \rceil - 1}(n-2k)} \\
	  & = \frac{(n-1)!!}{(n-2)!!} \label{replace2}
\end{alignat}.

Thus we have an alternate derivation of the double factorial formula:

\begin{cor}\label{mainprop}
	
	$\begin{aligned}
	\frac{n!!}{(n-1)!!} = \frac{1}{\left(\frac{(n-1)!!}{(n-2)!!}\right)} + \frac{(n-2)!!}{(n-3)!!}, & & \forall n \in \mathbb Z_{\geq 3}.
	\end{aligned}$
	
\end{cor}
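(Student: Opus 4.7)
The plan is to deduce the corollary from Proposition \ref{MainProp} by specializing the initial conditions to $\Gamma_1 = \Gamma_2 = 1$. The proposition asserts that for \emph{any} positive reals $z_1, z_2$ the sequence $\{\Gamma_n\}$ built from Definition \ref{DefnGGamma} satisfies $\Gamma_n = 1/\Gamma_{n-1} + \Gamma_{n-2}$, so it suffices to identify the particular instance $\Gamma_1 = \Gamma_2 = 1$ with a familiar quotient of double factorials and then let the recurrence do the work.

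First I would substitute $\Gamma_1 = \Gamma_2 = 1$ into $G$, obtaining $G'(j) = (j+1)/j$, so that $\Gamma'_n = \prod_{k=1}^{\lceil n/2\rceil -1}\frac{n-2k+1}{n-2k}$. The central computational step is to recognize this product as $\frac{(n-1)!!}{(n-2)!!}$. I would verify this by splitting into the two parities of $n$: for $n=2m$ the indices $n-2k$ run through the even numbers $2m-2,2m-4,\ldots,2$, which assemble into $(n-2)!!$ in the denominator and $(n-1)!!$ in the numerator; for $n=2m+1$ the indices $n-2k$ run through the odd numbers $2m-1,\ldots,1$, and again the numerator and denominator collapse to $(n-1)!!$ and $(n-2)!!$ respectively. (This is exactly the manipulation written out in equations \eqref{replace1}--\eqref{replace2} of the preceding section.)

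Once $\Gamma'_n = (n-1)!!/(n-2)!!$ is established for $n \geq 1$ (using the convention that the empty product for $n \in \{1,2\}$ agrees with $(0)!!/(-1)!!$ via the standard conventions $0!! = (-1)!! = 1$), the recurrence from Proposition \ref{MainProp} reads
\[
\frac{(n-1)!!}{(n-2)!!} \;=\; \frac{1}{\,(n-2)!!/(n-3)!!\,} + \frac{(n-3)!!}{(n-4)!!} \qquad (n \geq 3).
\]
Replacing $n$ by $n+1$ (which is legal as soon as $n+1 \geq 3$, i.e.\ $n \geq 2$, and certainly for $n \geq 3$ where the term $(n-3)!!$ is well-defined without appealing to any conventions beyond $0!! = 1$) produces the asserted identity.

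The main obstacle is not conceptual but clerical: keeping the two parities of $n$ straight when identifying the product with $(n-1)!!/(n-2)!!$, and being careful about the index shift and the range $n \geq 3$ at the end. Everything else is an immediate specialization of the already-proved proposition.
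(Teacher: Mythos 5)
Your proof is correct and follows essentially the same route as the paper: specialize $\Gamma_1 = \Gamma_2 = 1$, identify $\Gamma'_n$ with $(n-1)!!/(n-2)!!$ via the telescoped product, and invoke Proposition \ref{MainProp}. If anything you are more careful than the paper's one-line proof, which silently absorbs the index shift needed to pass from $\Gamma'_n = 1/\Gamma'_{n-1} + \Gamma'_{n-2}$ (for $n \geq 3$) to the stated identity with $n!!$ in the numerator on the left.
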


\begin{proof}
	We may replace $\Gamma$ wherever it occurs in Proposition \ref{MainProp} with $\Gamma\textprime$ (this is simply the result of setting $\Gamma_1 = \Gamma_2 = 1$ as shown in \ref{replace1}\textendash\ref{replace2}).
\end{proof}

\begin{bibdiv}

\begin{biblist}

\bib{Callan09acombinatorial}{article}{
	author = {David Callan},
	title = {A combinatorial survey of identities for the double factorial},
	year = {2009}
}

\bib{jansontraveling}{article}{
	title={ON THE TRAVELING FLY PROBLEM},
	author={Janson, Svante},
	subtitle = {http://www.osti.gov/eprints/topicpages/documents/record/029/3859183.html}
}

\bib{oeis1}{article}{
	date  = {Accessed on 2015/09/19},
	author = {OEIS Foundation Inc. (2011)},
	title = {The On-Line Encyclopedia of Integer Sequences},   
	subtitle = {http://oeis.org/A004731}
}

\bib{oeis2}{article}{
	date  = {Accessed on 2015/09/29},
	author = {OEIS Foundation Inc. (2011)},
	title = {The On-Line Encyclopedia of Integer Sequences},   
	subtitle = {http://oeis.org/A004730}
}
\bib{oeis3}{article}{
	date  = {Accessed on 2016/06/09},
	author = {OEIS Foundation Inc. (2011)},
	title = {The On-Line Encyclopedia of Integer Sequences},   
	subtitle = {http://oeis.org/AA006882}
}

\bib{mw1}{article}{
	date  = {Accessed on 2015/09/19},
	author = {Weisstein, Eric W.},
	title = {Double Factorial},
	note = {From MathWorld--A Wolfram Web Resource},   
	subtitle = {http://mathworld.wolfram.com/DoubleFactorial.html}
}

\end{biblist}

\end{bibdiv}

\end{document}